\documentclass[11pt,a4paper]{amsart}
\usepackage{verbatim}

\usepackage[utf8]{inputenc}
\usepackage[english]{babel}
\usepackage[dvipsnames]{xcolor}
\usepackage{hyperref}
\usepackage{tikz}
\usepackage{tikz-cd}

\usepackage{soul,color}
\usepackage[colorinlistoftodos]{todonotes}
\usepackage[width=\textwidth,font=footnotesize,labelfont=bf, aboveskip=30pt]{caption}
\usepackage[symbol]{footmisc}

\newcommand*{\fullref}[1]{\hyperref[{#1}]{\ref*{#1}. \nameref*{#1}}} 

\newcommand{\sO}{\mathcal{O}}
\newcommand{\OO}{\mathcal{O}}

\newcommand{\albdim}{\operatorname{alb} \dim}
\DeclareMathOperator{\Alb}{Alb}
\DeclareMathOperator{\alb}{alb}

\newcommand{\pp}{\mathbb P}
\newcommand{\C}{\mathbb C}

\usepackage[osf]{mathpazo}
\usepackage{charter}
\usepackage[a4paper, top=3.2cm, bottom=3.2cm,left=2.8cm, right=2.8cm, heightrounded,bindingoffset=0mm]{geometry}

\usepackage{hyperref}
\hypersetup{bookmarksnumbered=true, linkcolor=due, citecolor=Green, urlcolor=black,}

\usepackage{fancyhdr}

\usepackage{amsfonts}
\usepackage{amssymb}
\usepackage[bb=libus]{mathalpha}
\usepackage{mathbbol}
\usepackage{extarrows}
\usepackage{cleveref}
\usepackage{amsthm}
\usepackage{amscd}
\usepackage{amsmath}
\usepackage{mathtools}
\usepackage{mathrsfs}
\usepackage{float}

\usepackage{color}	
\definecolor{due}{RGB}{0,76,147}

\usepackage{graphicx}	
\usepackage{multicol}	
\usepackage{wrapfig}
\usepackage[english]{babel} 
\usepackage[utf8]{inputenc}
\usepackage[T1]{fontenc}
\usepackage{enumitem}

\usepackage{longtable}
\usepackage{cite}

\theoremstyle{definition}
\newtheorem{defi}{Definition}[section]
\theoremstyle{plain}
\newtheorem{thm}[defi]{Theorem}

\newtheorem*{thm*}{Theorem}

\newtheorem{prop}[defi]{Proposition}

\newtheorem{lemma}[defi]{Lemma}
\theoremstyle{remark}

\newtheorem{rmk}[defi]{Remark}
\newtheorem{ex}[defi]{Example}

\theoremstyle{definition}

\newcommand{\inv}{^{-1}}

\newcommand{\Z}{{\mathbb Z}}

\newcommand{\Pic}{\operatorname{Pic}}

\newcommand{\ol}{\overline}
\newcommand{\wt}{\widetilde}

\newcommand{\fie}{\varphi}
\newcommand{\tp}{_{\rm top}}

  \makeatletter
\newcommand{\xdashrightarrow}[2][]{\ext@arrow 0359\rightarrowfill@@{#1}{#2}}
\newcommand{\xdashleftarrow}[2][]{\ext@arrow 3095\leftarrowfill@@{#1}{#2}}
\newcommand{\xdashleftrightarrow}[2][]{\ext@arrow 3359\leftrightarrowfill@@{#1}{#2}}
\def\rightarrowfill@@{\arrowfill@@\relax\relbar\rightarrow}
\def\leftarrowfill@@{\arrowfill@@\leftarrow\relbar\relax}
\def\leftrightarrowfill@@{\arrowfill@@\leftarrow\relbar\rightarrow}
\def\arrowfill@@#1#2#3#4{%
  $\m@th\thickmuskip0mu\medmuskip\thickmuskip\thinmuskip\thickmuskip
   \relax#4#1
   \xleaders\hbox{$#4#2$}\hfill
   #3$%
}
\makeatother

\theoremstyle{definition}
\newtheorem{Question}[defi]{Question}
\numberwithin{equation}{section}

\begin{document}

	\title{Numerical inequalities  for quasi-projective surfaces}
	\author{Rita Pardini} 
	\address{Dipartimento di Matematica\\ Universit\`a di Pisa, Largo B. Pontecorvo 5, 56127 Pisa, Italy}
\email{\url{rita.pardini@unipi.it}}

	\author{Sofia Tirabassi}
\address{Department of Mathematics\\ Stockholm University, Albano hus 1, Stockholm, Sweden}
\email{\url{tirabassi@math.su.se}}

\keywords{Quasi-projective  surfaces, Logarithmic plurigenus, Logarithmic irregularity, birational geometry of log-surfaces}
\subjclass[2020]{Primary 14J10, Secondary  14E05}
\thanks{Rita Pardini is a member of GNSAGA-INdAM}

\clearpage\maketitle
\begin{abstract}
Let $V$ be a smooth quasi-projective complex surface with compactification $(X,D)$ and set $\ol P_1(V):=h^0(X,K_X+D)$, $\ol q(V):=h^0(X,\Omega^1_X(\log D))$. We prove that $\ol P_1(V)\ge \ol q(V)-1$ if $V$ has maximal Albanese dimension and $\ol P_1(V)\ge\frac 16( \ol q(V)-5)$ otherwise.
Both bounds are sharp.
\end{abstract}

\setcounter{tocdepth}{1}
\tableofcontents
\section{Introduction}
 Let $X$ be   a smooth projective complex surface and let, as usual, $p_g(X): =h^0(X,K_X)$ be the geometric genus, $q(X):=h^0(X, \Omega^1_X)$ the irregularity and $\kappa(X)$ the Kodaira dimension. It is a classical result that if  $\kappa(X)\ge 0$ then $p_g(X)\ge q(X)-1$, and  if $p_g(X)=q(X)-1$ then   $X$ is birationally equivalent to one of the following:
 \begin{itemize}
  \item a bielliptic surface 
 \item an abelian surface
 \item a properly elliptic surface $X$ such that the elliptic fibration $f\colon X \to C$ is a quasi-bundle, namely all the  singular fibers of $f$ have smooth support. (cf. \cite[Prop.~4.2]{serrano-cetraro}).
  \end{itemize}
  In particular, if $p_g(X)=q(X)-1$ and $X$ has maximal Albanese dimension then $X$ is 
birationally either an abelian or a properly elliptic surface.  
  \medskip
  
    The above  numerical invariants can be defined  more generally in the logarithmic 
setting. Given  a smooth quasi-projective surface $V:=X\setminus D$, where $X$ is a smooth 
projective surface and $D\subset X$ is a snc divisor, one sets ${\ol 
P}_1(V):=h^0(X, K_X+D)$ ``first log plurigenus'', $\ol q(V):=h^0(X, \Omega^1_X(\log D))$ 
``log irregularity'',  and lets $\ol k(V)$ ``log Kodaira dimension'' be the Iitaka 
dimension of $K_X+D$. One  also defines (cf. \S \ref{ssec: invariants}) the (logarithmic) 
Albanese map $a_V\colon V \to \Alb(V)$, with $\Alb(V)$ a quasi-abelian variety.
  
   In this paper we establish sharp lower bounds for $\ol{P}_1(V)$ in terms of $\ol 
q(V)$ when $V$ has nonnegative logarithmic Kodaira dimension, and we give restrictions 
on the possible boundary examples. 

More precisely, we prove that when $D>0$ the inequality 
\begin{equation}\label{eq:main1}
 {\ol P}_1(V)\geq \ol q (V)-1
\end{equation}
still holds for  surfaces $V$ of maximal Albanese dimension, but not for surfaces 
with $\operatorname{alb} \dim =1$, indipendently of the logarithmic Kodaira dimension. In addition, we show that the
  open surfaces of maximal Albanese dimension for which  \eqref{eq:main1} is an equality satisfy strong geometric restrictions, and we 
provide  explicit examples, including  some of log general type.\par

For surfaces of $\operatorname{alb} \dim =1$ and nonnegative  log Kodaira dimension, we 
show that the following inequality always holds
\begin{equation}
 \label{eq:main2}
 {\ol P}_1(V)\geq \frac{1}{6}(\ol q (V)-5)
\end{equation}

This bound is sharp, and there are families of surfaces with $\ol{\kappa}(V)=2$,  ${\ol 
P}_1(V)= \frac{1}{6}(\ol q (V)-5)$, and $\ol{q}(V)\rightarrow+\infty$. So under this respect open surfaces 
behave in a strikingly different way than projective ones.\par
\medskip

The proof of \eqref{eq:main1} relies on a classical argument involving (a 
logarithmic version of) Castelnuovo--de Franchis Theorem due to Bauer \cite{ingrid}:
the key point is that on a surface of maximal Albanese dimension one can construct global 2-forms by wedging global 1-forms.  \par
On the contrary, when the Albanese dimension is 1 there is no apparent connection between the existence of 
global  1-forms and that of global 2-forms. Indeed, the proof of \eqref{eq:main1} in the 
projective case involves Noether formula, which is not available in the open case.
The proof of \eqref{eq:main2} is  based instead on a delicate study of the geometry of the 
Albanese pencil. This study is our motivation for introducing the  notion of
$w$-fiber of a projective fibration of a surface, which is analyzed in detail in Section 
\ref{wfib} and  is, in our opinion, of independent interest. Given a fibration $X\rightarrow B$ with  general fiber of positive genus, we bound from below the holomorphic Euler  
characteristic  $\chi(X)$ in terms of  the   number of $w$-fibers, and 
characterize the fibrations attaining the bound.
\smallskip

  \textbf{Assumption:} We work over the field of complex number $\mathbb{C}$, all varieties are quasi-projective. We do not distinguish between divisors an line bundles on smooth projective varieties and use the additive and multiplicative notation interchangeably. 
  
  \section{Preliminaries}
  In this section we fix notation and terminology and we recall some general facts.
 
\subsection{Logarithmic invariants} \label{ssec: invariants}
Given  a smooth quasi-projective variety $V$ of 
dimension $n$, we can define its logarithmic (log) cohomological invariants by considering 
$V$ as an open subset of a smooth projective variety $X$ such that the complement 
$X\backslash V$ is a divisor $D$ with simple normal crossing (snc) support. In this case, 
we say that the pair $(X, D)$ is a smooth  compactification of $V$, or a compactification of 
$V$ with smooth boundary $D$, and we can consider the sheaf $\Omega_X^1(\log D)$, which is 
the subsheaf of the sheaf of rational 1-forms on $X$ consisting of forms that admit  at most logarithmic poles along $D$. Since $V$ is smooth and $D$ is snc, this is a vector bundle of rank 
$n$ on $X$, with  top wedge power $\bigwedge^n\Omega_X^1(\log D)\simeq \omega_X(D)$, 
where $\omega_X=\OO_X(K_X)$ denotes, as usual, the canonical bundle of $X$. While these sheaves 
depend on the choice of compactification, their cohomology depends only on $V$. In particular we denote:
\begin{itemize}
 \item $\overline{q}(V):=h^0(X,\Omega_X^1(\log D))$ is the log irregularity of $V$;
 \item $\overline{P}_m(V):=h^0(X,\omega_X(D)^{\otimes m})$ is the $m$-th log plurigenus 
of $V$;
\item since $\omega_X(D)$ is a line bundle, we can consider its Iitaka dimension $\kappa 
(X,\omega_X(D))$. This is called the log Kodaira dimension of $V$ and denoted by 
$\overline{\kappa}(V)$.
\end{itemize}
When $\dim V=1$ we write $\ol g(V)$ instead of $\ol P_1(V)$.

\subsection{(Quasi)-Albanese varieties} It is well known that to any smooth 
projective variety we can associate a pair $(\Alb(X), \alb_X)$ with $\Alb(X)$ an 
abelian variety and $\alb_X\colon X\rightarrow \Alb(X)$ a morphism obtained by 
integrating $1$-forms, 
satisfying the obvious universal property. Deligne 
\cite[Corollaire (3.2.14)]{Deligne1971} has proven that log 1-forms are closed and,  similarly 
to 
what happens for smooth projective varieties,   the torsion free part of 
$H_1(V,\mathbb{Z})$  embeds into the complex vector space $H^0(X,\Omega_X^1(\log D))^{\vee}$  as a discrete subgroup. The 
quotient $H^0(X,\Omega_X^1(\log D))^{\vee}/H_1(V,\mathbb{Z})$ is an algebraic group denoted by 
$\Alb(V)$, but it is not in general an abelian variety, since the rank of 
$H_1(V,\mathbb{Z})$ might not be maximal. However $\Alb(V)$ sits in an exact sequence of 
algebraic groups as follows:
\begin{equation*}
 1\rightarrow \mathbb{G}_m^r\longrightarrow \Alb(V)\longrightarrow\Alb(X)\rightarrow 0.
\end{equation*}
Since $\Alb(X)$ is an abelian variety,  $\Alb(V)$ is a quasi-abelian 
variety. After choosing a base point, analogously to the projective setting, there is a 
natural morphism
$$\alb_V\colon V\rightarrow \Alb(V)$$
  such that any morphism $g\colon V\rightarrow G$ to a quasi-abelian variety factors uniquely 
through $\alb_V$. The pair $(\Alb(V),\alb_V)$ is called quasi-Albanese (or simply Albanese) variety of $V$.\par
The Albanese dimension of $V$ is the dimension of the image of $\alb_V$ and it is denoted 
by $\albdim(V)$. We say that $V$ has maximal Albanese dimension if $\albdim(V)=\dim V$. 
We also say that $V$ is of Albanese general type if it has maximal Albanese dimension and 
$\dim V<\overline{q}(V)$; in particular the Albanese map of $V$ is not dominant.\par
For further reference, we recall how one can compute the logarithmic irregularity of a
variety $V$ starting from the irregularity of a compactification $X$ and the 
configuration of the components of the boundary divisor $D$.
Denoting by $D_1,\dots D_k$ the components of $D$,  we have  the cohomology exact 
sequence
\[0\rightarrow H^0(X,\Omega_X)\overset{\rho}{\rightarrow} H^0(X,\Omega_X(\log D))\rightarrow \oplus 
H^0(X,\sO_{D_i})\xrightarrow{p}H^1(X,\Omega_X)\]
where $\rho$ is the residue map and  $p$ is the map sending $1\in H^0(X,\sO_{D_i})$ to $c_1(D_i)$, $i=1,\dots k$.
So we have that 
\begin{equation}\label{eq:qbar}
 \overline{q}(V)=q(X)+k-\operatorname{rk} p.
\end{equation}

Another result that we will use frequently is the following Theorem of Kawamata 
(\cite[Theorem 1]{Kawamata1981_Curves})
\begin{thm}[Kawamata subadditivity formula]\label{subadd}
 Let $f \colon  V \rightarrow W$ be a dominant
morphism of complex algebraic varieties. If the general fiber of $f$ is an irreducible 
curve $F$, then we have the following inequality for logarithmic Kodaira
dimensions:
$$\ol\kappa(V)\geq \ol\kappa(W ) + \ol\kappa(F).$$
\end{thm}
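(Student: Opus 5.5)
The statement is Kawamata's subadditivity theorem for fibrations whose general fiber is a curve. The paper quotes it from \cite[Theorem 1]{Kawamata1981_Curves}, so in the paper it needs only a citation. If I had to prove it, I would reduce to a compactified fibration and prove a positivity statement for the relative log canonical bundle.

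\textbf{Reduction.} Shrinking $W$ and replacing $V$ by a suitable open subset only decreases $\ol\kappa(V)$, so it is harmless. I would therefore assume $W$ smooth and $f$ smooth, with connected fibers after Stein factorization; a finite étale-type base change does not lower $\ol\kappa(W)$. Then I choose smooth compactifications $(\bar V,D)$ and $(\bar W,E)$ with $D$ and $E$ snc, and a morphism $\bar f\colon\bar V\to\bar W$ extending $f$ with $\bar f^{-1}(E)\subset D$. If $\ol\kappa(F)=-\infty$ the inequality is trivial. Otherwise $\ol\kappa(F)\in\{0,1\}$, and the fiber $F$ with its boundary points is a log curve with $2g-2+\#(\text{punctures})\ge 0$.

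\textbf{Key step.} The main claim is that $\bar f_*\,\omega_{\bar V/\bar W}(D-\bar f^*E)^{\otimes m}$ is weakly positive for all $m\ge1$, up to a semistable reduction and a Kawamata covering of $\bar W$. For $m=1$ this follows from Fujita–Kawamata semipositivity, via the variation of mixed Hodge structure of the family of punctured curves. For $m\ge2$ I would use Viehweg's fiber-product trick together with weak positivity for log pairs.

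\textbf{Conclusion.} From weak positivity, for an ample $H$ on $\bar W$ and large $m$, I get
$$m(K_{\bar V}+D)+\bar f^*H\ \ge\ m\,\bar f^*(K_{\bar W}+E)+(\text{relatively big part}).$$
I then split into two cases.
\begin{itemize}
\item If $\ol\kappa(F)=1$, the relative log canonical bundle is $\bar f$-big. Combined with weak positivity, Viehweg's criterion then gives $\ol\kappa(V)\ge\ol\kappa(W)+1$.
\item If $\ol\kappa(F)=0$, the fiber $F$ is an elliptic curve or $\mathbb{C}^*$. The direct image $\bar f_*(\omega_{\bar V/\bar W}(D))^{[m]}$ is then a line bundle $L_m$ of nonnegative degree, and sections of $m(K_{\bar W}+E)\otimes L_m$ pull back to sections of $m(K_{\bar V}+D)$. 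This gives $\ol\kappa(V)\ge\ol\kappa(W)$.
\end{itemize}

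\textbf{Main obstacle.} I expect the hardest step to be the weak positivity in the logarithmic setting when the boundary $D$ is horizontal. One has to control both the degenerations of the punctured fibers and the case where the base has dimension greater than one. This is where Kawamata's curve-specific argument, using the moduli of pointed curves $\ol M_{g,n}$, is needed: the positivity of the Hodge bundle and of the $\psi$-classes on $\ol M_{g,n}$ gives the required weak positivity after pulling back along the moduli map.
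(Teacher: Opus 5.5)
Deferring to Kawamata is exactly what the paper does, since it gives no argument of its own. The sketch you offer in its place, however, has two genuine gaps. First, the reduction runs in the wrong direction. If $V^\circ\subset V$ is open then $\ol\kappa(V^\circ)\ge\ol\kappa(V)$: removing a divisor enlarges the boundary, so $K+D$ can only grow (e.g.\ $\ol\kappa(\pp^1)=-\infty$ while $\ol\kappa(\pp^1\setminus\{0,1,\infty\})=1$). The same holds after a ramified base change, because $V'\to V$ is then dominant and generically finite. So an inequality for $f^{-1}(W^\circ)\to W^\circ$ with $f$ smooth, or for the family after semistable reduction, implies nothing about $V$. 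The operations that are harmless for a \emph{lower} bound on $\ol\kappa(V)$ are:
\begin{itemize}
\item proper birational modifications;
\item removing subsets of codimension $\ge 2$;
\item \'etale covers;
\item \emph{enlarging} $V$, which is the direction used in the proof of Theorem~\ref{thm: albdim1} when $E$ is reattached.
\end{itemize}
Hence the singular fibers contained in $V$, and the multiplicities of the fibers over $E$, must stay in the picture. Any positivity you prove after semistable reduction has to be transported back to the original $\bar f$, by comparing direct images under base change and descending along the finite cover.

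Second, weak positivity alone does not give the inequality unless $K_{\bar W}+E$ is big. In your displayed inequality $\bar f^*H$ sits on the left. It can be absorbed only if $N(K_{\bar W}+E)-H$ is effective for some $N$, i.e.\ if $\ol\kappa(W)=\dim W$; that is additivity over a base of log general type, not the statement at hand. The case $\ol\kappa(F)=0$ shows the failure concretely. Take $W$ an elliptic curve and $E=0$; then you need $h^0(L_m^{\otimes k})\ne 0$ for some $k$. A non-torsion line bundle of degree $0$ is weakly positive but has no sections in any power, so ``nonnegative degree'' proves nothing (and it is not even defined when $\dim W>1$). What is needed is that $L_m$ is $\mathbb{Q}$-effective, in particular torsion when numerically trivial. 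That requires an input such as finiteness of monodromy: constant $j$-invariant for elliptic fibrations, and the action on the two punctures for $\C^*$-fibrations. Likewise, for $\ol\kappa(F)=1$ and $\ol\kappa(W)<\dim W$, weak positivity plus $\bar f$-bigness is not enough. You need bigness of the determinant of the direct image when the moduli map is generically finite, together with a separate treatment of the isotrivial directions. Supplying this finer information, rather than merely weak positivity, is what the moduli of pointed stable curves has to provide.
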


\subsection{Irrational pencils}\label{ssec: irr pencils}

Let $V$ be a smooth quasi-projective variety. An irrational pencil of $V$ of genus $\gamma>0$ is a morphism $\fie\colon V\to C$, where $C$ is a smooth curve with $\ol g(C)=\gamma$ such that the general fiber $\Phi$  of $\fie$ is irreducible.

We recall the following generalization of the Castelnuovo--de Franchis theorem:
\begin{thm}[\!\cite{ingrid}, Thm.~1.1]\label{thm: log CdF}
Let $V=X\setminus D$ be a smooth quasi-projective variety and let $\eta_1, \eta_2\in H^0(\Omega^1_X(\log D))$ be independent forms. 
If $\eta_1\wedge \eta_2=0$, then there are an irrational pencil $\fie\colon V\to C=\ol C\setminus \Delta$ and forms $\ol\eta_1,  \ol\eta_2\in H^0(\omega_{\ol C}(\Delta))$ such that $\eta_i=\fie^*\ol \eta_i$, $i=1,2$.
\end{thm}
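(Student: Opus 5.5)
The plan is to run the classical Castelnuovo--de Franchis argument, using Deligne's closedness of logarithmic forms in place of Hodge theory, and then to do a local pole analysis along the fibres.

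\emph{Step 1: the rational function.} Since $\eta_1\wedge\eta_2=0$ and $\eta_1\neq 0$, we can write $\eta_2=f\eta_1$ with $f\in\C(X)$. The function $f$ is nonconstant because $\eta_1,\eta_2$ are independent. By \cite[Corollaire (3.2.14)]{Deligne1971} both forms are closed, so
\[
0=d\eta_2=df\wedge\eta_1 .
\]
Hence $\eta_1$ is proportional to $df$, and the same holds for $\eta_2$.

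\emph{Step 2: the fibration.} Let $\sigma\colon X'\to X$ be a sequence of blow-ups resolving the indeterminacy of $f\colon X\dashrightarrow\pp^1$. Put $D':=\sigma^{-1}(D)_{\rm red}$. Then $D'$ is still snc, and the pullbacks $\sigma^*\eta_i$ are still logarithmic along $D'$. Take the Stein factorization $X'\xrightarrow{\fie}\ol C\to\pp^1$, so that $\fie$ has connected fibres. Since $\eta_i\wedge df=0$, each $\eta_i$ restricts to zero on the general fibre of $\fie$. Consequently $\eta_i=\fie^*\ol\eta_i$ for rational $1$-forms $\ol\eta_i$ on $\ol C$, obtained by descending along $\fie$ locally on a dense open set.

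\emph{Step 3: pole analysis (the key computation).} Let $p\in\ol C$ with local parameter $t$. Let $E$ be a component of $\fie^*p$ of multiplicity $m$, and let $k=\operatorname{ord}_p\ol\eta_i$. Then $\operatorname{ord}_E\fie^*\ol\eta_i=m(k+1)-1$. Since $\eta_i$ is holomorphic off $D'$ and has at most simple poles along $D'$, this gives two conclusions:
\begin{itemize}
\item $k\geq -1$ everywhere;
\item if $k=-1$, then every component of $\fie^{-1}(p)$ lies in $D'$.
\end{itemize}
Define $\Delta\subset\ol C$ as the set of points whose whole fibre is contained in $D'$, and set $C:=\ol C\setminus\Delta$. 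Then $\ol\eta_1,\ol\eta_2\in H^0(\omega_{\ol C}(\Delta))$, they are independent, and therefore $\ol g(C)\geq 2>0$. Moreover $\fie(X'\setminus D')\subseteq C$.

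\emph{Step 4: $\fie$ is a morphism on $V$.} This is the step I expect to be the real obstacle. If $g(\ol C)>0$, the rational map $X\dashrightarrow\ol C$ has no indeterminacy, so there is nothing to prove. If $\ol C=\pp^1$, then
\[
h^0(\omega_{\pp^1}(\Delta))=|\Delta|-1\geq 2,
\]
so $|\Delta|\geq 3$. Every base point $x$ of the pencil lies on the closure of every fibre. In particular it lies on the fibres over the points of $\Delta$, which are contained in $D$, so $x\in D$. Hence the indeterminacy locus is contained in $D$, and $\fie$ restricts to a morphism $V=X'\setminus D'\to C$.

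The general fibre of $\fie$ is connected by construction, so $\fie$ is the required irrational pencil of genus $\ol g(C)>0$, and $\eta_i=\fie^*\ol\eta_i$ for $i=1,2$.
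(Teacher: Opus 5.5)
The paper does not prove this statement; it quotes it from Bauer \cite{ingrid}, so there is no internal argument to compare yours with. Your proof is the classical Castelnuovo--de Franchis argument carried over to logarithmic forms: the ratio of the two forms, closedness via Deligne, Stein factorization, descent, and a pole-order count along fibre components. It is correct. The key computation $\operatorname{ord}_E\fie^*\ol\eta_i=m(k+1)-1$ is what shows that $\ol\eta_i$ has at most simple poles, and only at points whose entire fibre lies in the boundary.

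A few points could be tightened.

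\begin{enumerate}
\item In Step 2, the descent to $\ol C$ does not follow from $\eta_i$ vanishing on fibres alone. What does the work is the closedness from Step 1: writing $\eta_i=g_i\,df$, the condition $dg_i\wedge df=0$ makes $g_i$ constant on the connected fibres of $\fie$, hence a pull-back from $\ol C$. You should say this explicitly.
\item The snc property of $D'$ is never actually used. For any prime divisor $E\subset X'$ one has $\operatorname{ord}_E\sigma^*\eta_i\ge -1$, and $\operatorname{ord}_E\sigma^*\eta_i\ge 0$ unless $E\subseteq\sigma^{-1}(D)$, simply because $\sigma^*(dz/z)=d\log(\sigma^*z)$. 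This is worth noting, since in dimension $>2$ keeping $D'$ snc would require a log resolution.
\item In Step 4 you only need $\Delta\neq\emptyset$, not $|\Delta|\ge 3$: base points lie on every member of the pencil, and the members over $\Delta$ lie in $D$.
\item Alternatively, the classical argument avoids the case split on $g(\ol C)$. If $x\in V$ were a point of indeterminacy, take a curve $E\subseteq\sigma^{-1}(x)$ not contracted by $\fie$. Then $0=(\sigma|_E)^*\eta_i=(\fie|_E)^*\ol\eta_i\neq 0$, a contradiction.
\item Writing $V=X'\setminus D'$ requires $\sigma$ to be an isomorphism over $V$. This holds if you resolve the indeterminacy by blowing up only over the base locus, which you have shown lies in $D$.
\end{enumerate}
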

A subspace $T$ of $H^0(\Omega^1_X(\log D))$ is called {\em totally isotropic} if $\dim T\ge 2$ and $\wedge^2 T=0$. A straightforward  consequence of Theorem \ref{thm: log CdF}   is that if $\fie \colon V\to C=\ol C\setminus \Delta$ is an irrational pencil of genus $\ge 2$, then $\fie^*H^0(\omega_{\ol C}(\log \Delta))$ is a maximal totally isotropic subspace and all maximal isotropic subspaces arise in this way. 

\subsection{WWPB equivalence}\label{ssec: WWPB}
When studying the birational geometry of open varieties, one first needs to understand what is 
the correct equivalence relation to use. In fact, the usual birational equivalence is 
quite meaningless in the open settting, since birational maps do not preserve logarithmic 
invariants.\par
Open immersions $i:U\rightarrow W$ where $W\backslash U$ has codimension at least 2, and 
proper birational maps preserve the logarithmic invariants. Thus one might be tempted to 
consider two varieties $V$ and $W$ to be equivalent if they are connected by a sequences 
of rational maps

\[
\begin{tikzcd}
V=:V_0 \arrow[r, dashed, "\varphi_0"] & V_1 \arrow[r, dashed, "\varphi_1"] & \dots 
\arrow[r, dashed, "\varphi_{n-1}"] & V_n:=W
\end{tikzcd}
\]
where the $\varphi_i$ are  proper birational, or open immersion as above, or their birational 
inverses. In this case we say that $V$ and $W$ are \emph{weak proper birationally 
(WPB) equivalent}.\par
Hoever, the example below show that the set of birational maps inducing WPB equivalences 
is not saturated, and thus there is a wider class of maps that preserves the logarithmic 
invariants.
\begin{ex}\label{ex;non saturated}
Let $X$ be a proper variety of dimension at least 2. Let $p\in X$ a point over 
$\mathbb{C}$ and consider $\operatorname{Bl_p}\colon \tilde{X}\rightarrow X$ the blow up of $X$ 
in $p$. Denote by $E$ the exceptional divisor, and let $V:=\tilde{X}\backslash E$ and let 
$U:=X\backslash p$. Now the restriction $\varphi:=\operatorname{Bl}_{p|V}\colon V\rightarrow X$ 
is clearly a WPB-morphism, since it is the composition of 
$\operatorname{Bl}_{p|V}\colon V\rightarrow U$ which is proper birational and the inclusion 
$U\hookrightarrow X$, which is a map of type (2) above.\par
However we can factor $\varphi$ as a different composition: we can first consider 
$\varphi_1:V\hookrightarrow\tilde{X}$ the inclusion, and then compose it with 
$\operatorname{Bl_p}\colon\tilde{X}\rightarrow X$. We note immediately that $\varphi_1$ is not 
WPB-morphism.
\end{ex}
 To fix the problem brought to light by the eample  we will consider a larger 
equivalence relation. Let 
\[\mathcal{W}:=\{f\colon V\rightarrow U\:|\:\exists\:W, \text{and 
either 
$g\colon U\rightarrow W$ or $h\colon W\rightarrow V$ such that $f\circ h$ 
or $g\circ f$ are WPB}\}\]
We say that a birational map $\varphi\colon V\dashrightarrow U$ is \emph{weakly weak proper 
birational (WWPB)} if it is the composition of morphisms in $\mathcal{W}$ or of rational 
inverses of morphism in $\mathcal{W}$.\par
It is easy to see now that, in the notation of example \ref{ex;non saturated}, we have 
that the inclusion $V\hookrightarrow\tilde{X}$ is a WWPB-morphism even if it was not WPB. 
We say that two varieties are WWPB-equivalent if there is a WWPB-map between them. In 
\cite{Iitaka WWPB} the author shows that WWPB equivalent varieties share the same 
logarithmic invariants.\par
From the work of Iitaka, it is commonly agreed that WWPB-equivalence is the correct 
notion to use when studying the birational geometry of open varieties.

  \section{The inequality ${\ol P}_1\ge \ol q-1$ for surfaces of maximal Albanese dimension }
  Our first result is a straightforward consequence of   the generalized Castelnuovo--de Franchis theorem (Theorem \ref{thm: log CdF}).
  \begin{thm}\label{thm: mAd} Let $V$ be a smooth quasi-projective surface of maximal Albanese dimension. Then:
  $${\ol P}_1(V)-\ol q(V)+1\ge 0.$$
  \end{thm}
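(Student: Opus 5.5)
The plan is the classical wedge-product argument, carried out with log forms. Set $W:=H^0(X,\Omega^1_X(\log D))$, so $\dim W=\ol q(V)=:q$, and consider the wedge map
$$\wedge\colon \textstyle\bigwedge^2 W\longrightarrow H^0(X,\omega_X(D)),$$
which is well defined because $\bigwedge^2\Omega^1_X(\log D)\simeq\omega_X(D)$. We may assume $q\ge 3$: if $q\le 1$ the inequality ${\ol P}_1(V)\ge q-1$ is trivial; if $q=2$, maximal Albanese dimension forces two independent log $1$-forms whose wedge is not identically zero, so ${\ol P}_1(V)\ge 1$.

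For $q\ge 3$ we restrict the wedge map to the decomposable tensors $\eta_1\wedge\eta_2$, i.e.\ to the affine cone $\widehat G\subset\bigwedge^2W$ over the Grassmannian $G(2,W)$, which has dimension $2(q-2)+1=2q-3$. Let $K\subset\bigwedge^2W$ be the kernel of the wedge map. The image of $\wedge$ contains the linear span of $\wedge(\widehat G)$, and
$$\dim \wedge(\widehat G)\ \ge\ \dim\widehat G-\dim(\widehat G\cap K).$$
So it suffices to bound $\dim(\widehat G\cap K)$ from above.

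By Theorem~\ref{thm: log CdF}, a nonzero decomposable $\eta_1\wedge\eta_2\in K$ yields an irrational pencil $\fie\colon V\to C$ with $\eta_1,\eta_2\in\fie^*H^0(\omega_{\ol C}(\Delta))$. Thus $\widehat G\cap K$ is the union, over the irrational pencils $\fie$ of $V$ of genus at least $2$, of the cones $\widehat G(2,T_\fie)$, where $T_\fie=\fie^*H^0(\omega_{\ol C}(\Delta))$ is the maximal totally isotropic subspace attached to $\fie$. Here the genus is $\ge 2$ because $\dim T_\fie\ge2$. Two points need checking here.
\begin{itemize}
\item There are only finitely many such pencils. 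Their isotropic subspaces have pairwise trivial intersection, since two independent forms in $T_\fie\cap T_{\fie'}$ would span an isotropic plane lying in a unique maximal isotropic subspace. A continuous family of pencils would therefore give a family of subspaces, and rigidity of morphisms to curves of log genus $\ge2$ (de Franchis type finiteness) rules that out.
\item Each such subspace is a proper subspace, $t:=\dim T_\fie\le q-1$. Indeed, if $T_\fie=W$ then every form would be pulled back from $C$, and $\alb_V$ would factor through a curve, contradicting maximal Albanese dimension.
\end{itemize}
Hence
$$\dim\widehat G(2,T_\fie)=2t-3\le 2q-5,$$
and so $\dim(\widehat G\cap K)\le 2q-5$.

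It follows that $\wedge(\widehat G)$ has dimension at least $(2q-3)-(2q-5)=2$. I would like to conclude $h^0(\omega_X(D))\ge q-1$, so this estimate needs improving. The cleaner route is the classical one: fix a general $\eta\in W$ that lies in no $T_\fie$, which is possible since there are finitely many proper subspaces. The map
$$W/\langle\eta\rangle\longrightarrow H^0(X,\omega_X(D)),\qquad \omega\mapsto\eta\wedge\omega,$$
is then injective. Indeed, $\eta\wedge\omega=0$ with $\omega\notin\langle\eta\rangle$ would, by Theorem~\ref{thm: log CdF}, place $\eta$ in some $T_\fie$. This gives ${\ol P}_1(V)\ge q-1$ directly and makes the dimension count above unnecessary; I would present this version.

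The main obstacle is therefore only the choice of $\eta$. One must know that the union of the $T_\fie$ does not cover $W$, that is, finiteness of irrational pencils of genus $\ge2$ in the log setting, together with the properness of each $T_\fie$. Properness follows from maximal Albanese dimension as explained above. For finiteness I would use that each $T_\fie$ is determined by $\fie$, and that log-general-type curves admit only finitely many dominant maps from $V$ up to the relevant equivalence. Alternatively I would argue that a positive-dimensional family of maximal isotropic subspaces covering $W$ would force $\eta\wedge\omega=0$ for generic pairs, contradicting maximal Albanese dimension.
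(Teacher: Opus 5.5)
Your final argument (choose $\eta$ outside every maximal isotropic subspace $T_\fie$, so that $\omega\mapsto\eta\wedge\omega$ is injective on $W/\langle\eta\rangle$) is exactly the paper's proof. Your properness argument for each $T_\fie$ is also fine. The gap is in the step you yourself flag: you never actually show that the $T_\fie$ fail to cover $W$.

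Finiteness of the pencils is only asserted. Rigidity of maps to a \emph{fixed} curve of log genus $\ge 2$ does not control a set of pencils whose targets $C$ vary with $\fie$. Moreover, ``a continuous family of pencils would give a family of subspaces'' presupposes a parameter space for the pencils that you have not constructed. What you are really invoking is a log de Franchis--Severi finiteness statement, which is true (see the end of this comment) but not a formality.

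Your fallback argument does not work either. If the $T_\fie$ covered $W$, you would only know that each $\eta$ admits \emph{some} $\omega\notin\langle\eta\rangle$ with $\eta\wedge\omega=0$. That is much weaker than $\eta\wedge\omega=0$ for a generic pair: for any nonzero alternating form on $\C^3$, every vector lies in an isotropic plane, yet generic pairs are not isotropic. So no contradiction with maximal Albanese dimension follows.

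The missing idea is that countability suffices, and it comes for free from the rational structure. The map $\fie^*\colon H^1(C)\to H^1(V)$ is a morphism of mixed Hodge structures, so by strictness $T_\fie=\fie^*F^1H^1(C,\C)=W\cap\bigl(\fie^*H^1(C,\mathbb{Q})\otimes\C\bigr)$, where $W=F^1H^1(V,\C)$. Thus $T_\fie$ is determined by a $\mathbb{Q}$-subspace of the finite-dimensional $\mathbb{Q}$-vector space $H^1(V,\mathbb{Q})$, and there are at most countably many $T_\fie$. Since $W$ is not a countable union of proper subspaces, a very general $\eta$ avoids all of them; this is precisely how the paper concludes.

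If you want finiteness as well, it now follows from the setup you discarded. The isotropic locus $\mathbb{P}(K)\cap G(2,W)$ is Zariski closed. By your observation that distinct $T_\fie$ share no $2$-plane, it is the disjoint union of the countably many irreducible closed sets $G(2,T_\fie)$. An irreducible complex variety is not a countable union of proper closed subsets, so each of its finitely many irreducible components is one of the $G(2,T_\fie)$.
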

  \begin{proof}  Let $(X,D)$ be a compactification of $V$. The maximal totally isotropic subspaces of $H^0(\Omega^1_X(\log D))$ correspond to the irrational  pencils of $V$ of genus $\ge 2$ (cf. \S \ref{ssec: irr pencils}), hence they are  proper subspaces, since $V$ has maximal Albanese dimension by assumption, and there are at most countably many, since they are mixed Hodge substructures of $H^1(V,\C)$. So, if $\eta\in H^0(\Omega^1_X(\log D))$ is very general, then the  kernel of the map $\wedge \eta \colon H^0(\Omega^1_X(\log D))\to H^0(K_X+D)$ is the line spanned by $\eta$. Therefore $\ol q(V)-1\le \ol P_1(V)$.
   \end{proof}

 \begin{rmk}\label{rem: 1sharp}
 Theorem \ref{thm: mAd} is sharp. Indeed, if $\Phi$, $C$ are quasi-projective curves  with $\ol g(\Phi)=1$ and $\ol g(C)>0$, then $V:=\Phi\times C$ has maximal Albanese dimension and $\ol P_1(V)=\ol g(C)=\ol q(V)-1$. If $\Phi$ is the complement of a point in a curve $\ol\Phi$ of genus 1, then $V$ is of log general type, in contrast with the projective case since  surfaces of general type satisfy $p_g>q-1$.

 \end{rmk}
 As recalled in the introduction,   a projective surface of Albanese general type with $p_g=q-1$ is an elliptic quasi-bundle on a curve of genus $q-1$. 
 Below we show the existence of a fibration onto a curve of genus $\ol q-1$ also in the logarithmic case. 
 
   \begin{prop}
  Let $V$ be a smooth quasi-projective surface of  Albanese general type such that $\ol q(V)\ge 3$ and ${\ol P}_1(V)-\ol q(V)+1=0$.\par  Then there is a fibration  $\fie \colon V\to C$   such that 
   $\ol g(C)=\ol q(V)-1$.
  \end{prop}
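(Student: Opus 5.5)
Set $W:=H^0(\Omega^1_X(\log D))$, $q:=\ol q(V)$, and consider the wedge map $\wedge\colon \wedge^2 W\to H^0(K_X+D)$. The target has dimension $q-1$. The plan is the classical projective argument (Catanese's use of Castelnuovo--de Franchis) adapted to log forms: exploit that for a very general $\eta$ the map $\wedge\eta\colon W/\langle\eta\rangle\to H^0(K_X+D)$ is injective, hence an isomorphism since both sides have dimension $q-1$. Thus every log $2$-form is $\eta\wedge\alpha$ for some $\alpha$, for every very general $\eta$. From this we produce decomposable $2$-vectors in the kernel of $\wedge$ and apply Theorem \ref{thm: log CdF}.

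\textbf{Step 1.} Take very general $\eta_1,\eta_2\in W$. For each $\alpha\in W$, $\eta_1\wedge\alpha=\eta_2\wedge\beta(\alpha)$ defines a linear map $\beta$, well defined up to $\langle\eta_2\rangle$. Hence there is a linear map $\psi\colon W/\langle\eta_1\rangle\to W/\langle\eta_2\rangle$ with $\eta_1\wedge\alpha=\eta_2\wedge\psi(\alpha)$. Since $\dim W/\langle\eta_1\rangle=q-1\ge 2$, we look for a nonzero $\alpha$ such that $\psi(\alpha)$ is proportional to $\alpha$, modulo the lines spanned by $\eta_1$ and $\eta_2$. The cleanest version works on $W/\langle\eta_1,\eta_2\rangle$, which has dimension $q-2\ge 1$: the composite of $\psi$ with the projection gives an endomorphism of $W/\langle \eta_1,\eta_2\rangle$, and over $\C$ it has an eigenvector $\bar\alpha$ with eigenvalue $\lambda$. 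Lifting, we get $\alpha$ with $\psi(\alpha)=\lambda\alpha+a\eta_1+b\eta_2$. This yields
\[\eta_1\wedge\alpha=\eta_2\wedge(\lambda\alpha+a\eta_1),\]
so that
\[(\eta_1+\lambda\eta_2)\wedge(\alpha- a'\eta_2)=0\]
for a suitable constant $a'$. This gives a decomposable element in the kernel with two independent factors.

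\textbf{Step 2.} By Theorem \ref{thm: log CdF} there is an irrational pencil $\fie\colon V\to C$ whose pulled-back forms contain $\eta_1+\lambda\eta_2$. Since there are only countably many irrational pencils of genus $\ge 2$ (\S\ref{ssec: irr pencils}), varying the very general pair forces one fixed pencil $\fie$ such that $T:=\fie^*H^0(\omega_{\ol C}(\Delta))$ meets the pencil $\langle\eta_1,\eta_2\rangle$ for a general $2$-plane. A subspace meeting every general $2$-plane of $W$ has codimension $\le 1$, so $\ol g(C)=\dim T\ge q-1$. Properness of $T$ follows from maximal Albanese dimension, so $\ol g(C)=q-1$. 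A pencil of genus $1$ must also be excluded here: Theorem \ref{thm: log CdF} with two independent forms yields $\ol g(C)\ge 2$, so genus $1$ cannot occur.

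\textbf{Main obstacle.} The delicate point is the countability/semicontinuity argument in Step 2. One must pass from "each general pair produces some pencil" to "a single pencil whose forms form a hyperplane". I would handle it by noting that the union of the countably many proper subspaces $T_i$ must contain the variety of vectors $\eta_1+\lambda\eta_2$ obtained from general pairs. These vectors fill an open subset of $W$ if $T$ had codimension $\ge2$: a generic plane would miss it, yet Step 1 produces an intersection for every general plane. Baire category therefore forces some $T_i$ to be a hyperplane.
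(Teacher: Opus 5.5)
Your argument is correct, and it takes a genuinely different route from the paper.

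\textbf{The paper's route.} The paper argues pointwise and by contradiction. Its Step 1 shows that if $\eta$ is not a pull-back from a pencil of genus $\ge 2$ and $\eta(p)=0$ for some $p\in V$, then $p$ is a base point of $|K_X+D|$, because every log $2$-form is $\eta\wedge\beta$. Its Step 2 takes a very general point $p$. The forms vanishing at $p$ form a subspace of codimension $2$ in $W$. This subspace meets each $T_\fie:=\fie^*H^0(\omega_{\ol C}(\Delta))$ in dimension $\ol g(C)-1$. So if no pencil had $\ol g(C)=\ol q(V)-1$, one could choose such an $\eta$ vanishing at $p$, contradicting Step 1.

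\textbf{Your route.} You stay inside the linear algebra of $\wedge\colon\wedge^2W\to H^0(K_X+D)$. The eigenvector of the induced endomorphism of $W/\langle\eta_1,\eta_2\rangle$ shows that every $2$-plane $P\subset W$ meets $\bigcup_\fie T_\fie$. Equivalently, the binary form $\det\bigl(\alpha\mapsto(s\eta_1+t\eta_2)\wedge\alpha\bigr)$, taken from $W/P$ to $H^0(K_X+D)/\langle\eta_1\wedge\eta_2\rangle$, has degree $\ol q-2\ge 1$ and so has a root; this shows the conclusion holds for every plane, not only very general ones. A countability argument on $G(2,W)$ then gives a hyperplane $T_\fie$.

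\textbf{Comparison.} The two proofs are dual. The paper shows that one specific codimension-$2$ subspace lies in $\bigcup T_\fie$; you show that every $2$-plane meets it. Your version avoids the Albanese differential, critical points and base loci, and makes visible exactly where $\ol q\ge 3$ and $\ol P_1=\ol q-1$ enter. The paper's version yields the extra geometric statement of its Step 1.

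\textbf{Points to tidy up.}
\begin{itemize}
\item For the endomorphism of $W/\langle\eta_1,\eta_2\rangle$ to be well defined, you need $\psi(\eta_2)\in\langle\eta_1,\eta_2\rangle$ modulo $\eta_2$. This holds because $\eta_1\wedge\eta_2=\eta_2\wedge(-\eta_1)$, so $\psi(\eta_2)\equiv-\eta_1$.
\item With your conventions the resulting relation is $(\eta_1-\lambda\eta_2)\wedge(\alpha+a\eta_2)=0$, not $(\eta_1+\lambda\eta_2)\wedge(\alpha-a'\eta_2)=0$. This sign slip is harmless.
\item The ``main obstacle'' paragraph is muddled. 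Replace it with the clean statement: $\{P\in G(2,W): P\cap T_i\neq 0\}$ is Zariski closed, and it is a proper subset exactly when $\operatorname{codim}T_i\ge 2$, while the irreducible $G(2,W)$ is not a countable union of proper closed subsets.
\item Countability of the pencils is justified in the proof of Theorem \ref{thm: mAd}, not in \S\ref{ssec: irr pencils}.
\end{itemize}
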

   \begin{proof} 
 Let $Y$ be a compactification of $\Alb(V)$ and let $(X,D)$ be a compactification of $V$ such that the Albanese map of $V$ extends to a morphism $\alpha\colon X\to Y$.   
 The proof is split in two steps. 
 \smallskip 
 
 We denote by $\Sigma\subseteq H^0(\Omega^1_X(\log D))$ the set of  1-forms that are not pull backs via an irrational pencil of genus $\ge 2$.  
   
 \noindent  {\bf Step 1:} {\em If $\eta\in  \Sigma$ and    $\eta(p)=0$ for some $p\in V$, then $p$ is a base point of $|K_X+D|$.}
 
 As explained in the proof of Theorem \ref{thm: mAd},   if $\eta\in\Sigma$  then  the kernel of $\wedge \eta \colon H^0(\Omega^1_X(\log D))\to H^0(K_X+D)$ is the line spanned by $\eta$.
Since $\ol P_1(V)=\ol q(V)-1$,  it follows that any $2$-form $\sigma\in H^0(K_X+D)$ can be written as $\sigma=\eta\wedge \beta$ for some $\beta\in H^0(\Omega^1_X(\log D))$, and therefore  vanishes at $p$.
  \medskip 

 \noindent {\bf Step 2:} {\em there is an irrational pencil $\fie \colon V\to C$ with $\ol g(C)=\ol q(V)-1$.}
 
  Let $p\in V$ be a very general point: then $p$ is not a critical point neither of 
$\alb_V$ nor of any irrational pencil of $V$. Let $T$ be the tangent space to $\Alb(V)$ at 
the origin and let $S$ be the 2-dimensional subspace of $T$ corresponding to the image of 
the differential of $\alb_V$ at $p$. Since  for any irrational pencil $\fie$ the plane $S$ 
intersects the tangent space to $\ker \fie_*$ in a line and since there are at most 
countably many such fibrations,  we can find a linear form  on $T$ that vanishes on $S$  
but does not vanish on the tangent space to $\ker \fie_*$ for any $\fie$. Pulling back 
this form to $V$ we obtain a 1-form $\eta\in\Sigma$ that vanishes at $p$.  By  Step 1, $p$ 
is a base point of $|K_X+D|$, contradicting the assumption that $p$ is very general.
  \end{proof}
\begin{Question}
In the examples given in Remark \ref{rem: 1sharp} the irrational pencil is the projection $\Phi\times C\to C$ and has general fiber with $\ol g =1$.
 We have not been able to find  examples  where the  general fiber of the irrational pencil has $\ol g>1$.
  Thus it is natural to wonder whether, as in the projective case,  the general fiber has always  $\ol{g}=1$.
\end{Question}

  \subsection{Surfaces  of maximal Albanese dimension and log general type with ${\ol 
P}_1(V)=1$, $\ol q(V)=2$}

We have no structure result for surfaces of log Albanese general type with  ${\ol P}_1(V)=1$, $\ol q(V)=2$.
In this section we describe  some  examples that show that a classification is probably hard to obtain.

\begin{ex} \label{ex: theta}
Let $X$ be a principally polarized abelian surface and $D$ a Theta divisor and set $V:=X\setminus D$. If $D$ is irreducible then $D$ is a smooth curve of genus 2 while if $D$ is reducible then it is the union of two elliptic curve meeting transversally at one point. 
In either case $D$ is snc and $\ol P_1(V)=1$, $\ol q(V)=2$, $\ol \kappa(V)=2$.
\end{ex}
Looking at the above example one might be tempted to conjecture that the only surfaces 
of 
maximal Albanese dimension and log general type with $\ol q=2$ and ${\ol P}_1=1$ are, up to WWPB-equivalence (cf. \S \ref{ssec: WWPB}),
complements of a theta-divisor in a principally polarized abelian surface, but our next examples show that the situation is 
more complicated.

\begin{ex}[$X$ abelian] \label{ex: theta gen}  
Example  \ref{ex: theta} generalizes as follows. Let $C\subset J(C)$ be a smooth genus two curve 
embedded in its Jacobian and let $G<J(C)$  be a finite subgroup of order $m$. Consider 
the 
quotient map  $p\colon J(C)\to \ol X:=J(C)/G$ and set  $\ol D:=p(C)$. By the Hurwitz 
formula $C$ does not have a fixed point free automorphism, so the map $C\to \ol D$ is 
birational. Let $\epsilon\colon  X\to \ol X$ be an embedded resolution of the 
singularities of $\ol D$,  denote by $D$ the strict transform of $\ol D$, which is a 
smooth curve of genus two and set $V:=X\setminus D$. Then taking cohomology in the 
residue 
sequence $0\to K_X\to K_X+D\to K_D\to 0$ and observing that the map $H^0(K_D)\to 
H^1(K_X)$ 
is dual to the map $ H^1(\OO_X)\to H^1(\OO_D)$, which is an isomorphism by construction, 
one sees that the natural map $H^0(K_X)\to H^0(K_X+D)$ is an isomorphism, hence ${\ol 
P}_1(V)=1$. 
Since $D$ is irreducible, the map $H^0(\Omega^1_X)\to H^0(\Omega^1_X(\log D))$ is an 
isomorphism, hence  $\ol q(V)= q(X)=2$ and $X$ is the Albanese map of $V$.

Finally we need to show that $V$ is of log general type. Denote by $E_1,\dots E_k$ the 
(possibly reducible) $(-1)$-curves  of $X$ and write $D=\epsilon^*\ol D-\sum m_iE_i$, so 
that $K_X+D=\epsilon^*\ol D-\sum (m_i-1)E_i$. For $0<t\ll 1$, $K_X+D=t\epsilon ^*\ol 
D+(1-t)(\epsilon^*\ol D-\sum \frac{m_i-1}{1-t}E_i)\ge t\epsilon ^*\ol D+(1-t)D$, so 
$K_X+D$ is big and $\ol \kappa(V)=2$.
\end{ex}
Next we give an example where $X$ is a properly elliptic surface. 
\begin{ex}[$X$ properly elliptic] \label{ex: ell}
Let $E$ be a genus one curve and $\pi_1\colon C\to E$ a double cover, with $C$ smooth of 
genus 2. Denote by $\iota$ the hyperelliptic involution of $C$ and by $\sigma$ the 
involution of $C$ induced by $f$. Since $\iota$ commutes with all the automorphisms of 
$C$, the group generated by $\iota$ and $\sigma$ is isomorphic to $\Z_2^2$ and 
$\tau:=\iota\circ \sigma$ is an elliptic involution. 
We denote by $\pi_2\colon C\to F:=C/\tau$ the quotient map; up to a translation in $F$, 
we 
may assume that $\pi_2(\sigma(x))=-\pi_2(x)$ for all $x\in C$. We let $\Z_2$ act on $C$ 
as 
$\sigma$, on $F$ as translation by a nonzero  $2$-torsion point $\eta$ and diagonally on 
$C\times F$ and we denote by $S$ the quotient surface.  Since  $\Z_2$ acts freely, the 
quotient map $C\times F\to S$ is \'etale, hence $S$ is a smooth minimal properly elliptic 
surface. In addition, $q(S)=2$, $p_g(S)=1$; more precisely  $\Alb(S)=E\times F'$, where 
$F'$ is the quotient of $F$ by translation by $\eta$,  and the Albanese map is the double 
cover of $E\times F'$ induced by $C\times F\to E\times F'$.

We let $f\colon S\to E$ and $h\colon S\to F'$ be the fibrations induced by the 
projections 
of $C\times F$ onto the factors. The general fiber of $f$ is isomorphic to $F$, there 
are 
two double fibers $2F_1$ and $2F_2$ and $F_1+F_2$ is the only effective canonical 
divisor. The fibration $h$ is smooth and isotrivial with  fibers isomorphic to $C$.

 One can define two additional  fibrations $\psi^+, \psi^-\colon C\times F\to F$ by 
setting $(x,y)\mapsto y+\pi_2(x)$, $(x,y)\mapsto y-\pi_2(x)$, respectively.  The 
diagonal 
$\Z_2$-action on $C\times F$ switches $\psi^+$ and $\psi^-$, and 
 both fibrations are smooth and isotrivial with fibers isomorphic to $C$. Given $z\in F$, 
we denote by $G^+_z$, resp. $G^-_z$ the fiber over $z$ of $\psi^+$, resp. $\psi^-$.
 So $G^+_z$ is defined by  $y+\pi_2(x)=z$ and its transform under  the $\Z_2$-action is 
$G^-_{z+\eta}$, defined by  $y-\pi_2(x)=z+\eta$. The common points of $G^+_z$ and 
$G^-_{z+\eta}$ satisfy $2y=2z+\eta$. So there are 4 possibilities for $y$ and, if $z$ is 
general, $G^+_z$ and $G^-_{z+\eta}$ meet transversally at 8 points. Set $\Gamma_0:= 
G^+_z+G^-_{z+\eta}$: $\Gamma_0$ is nef and big, $\Gamma_0^2=16$, $\Gamma_0K_{C\times 
F}=4$. So the image $\Gamma$ of $\Gamma_0$ in $S$ has 4 nodes and satisfies $\Gamma^2=8$, 
$\Gamma K_S=2$. 
 Since $\Gamma$ is nef and big, $h^1(-\Gamma)=0$ by Kawamata-Viehweg vanishing, hence the 
map $H^1(\OO_S)\to H^1(\OO_{\Gamma})$ is injective. So the natural map  $\Pic^0(S)\to 
\Pic^0(\Gamma)$ has finite kernel. Since $\Gamma$ is birational to $C$, the compact part 
of   $\Pic^0(\Gamma)$ is the Jacobian $J(C)$ and therefore there is an induced isogeny 
$\Pic^0(S)\to J(C)$.
  
 Now let $\epsilon\colon X\to S$ be the blo$w$-up of the 4 nodes of $\Gamma$ and let $D$ be 
the strict transform of $\Gamma$ in $X$. The divisor $D$ is smooth, isomorphic to $C$,  
and there is a commutative diagram:
 \[ 
   \begin{tikzcd}
   D \rar \dar & \Gamma \dar\\
   \Alb(X) \rar & \Alb(S)
   \end{tikzcd}
  \]
  where the bottom arrow is an isomorphism. By the previous considerations, the induced 
map $J(D)\to \Alb(X)$ is an isogeny, hence $H^1(\OO_X)\to H^1(\OO_D)$ is an isomorphism. 
Hence, by Serre duality the dual map $H^0(K_D)\to H^1(K_X)$ is also an isomorphism and 
taking cohomology in  the residue sequence $0\to K_X\to K_X+D\to K_D\to 0$ gives an 
isomorphism $H^0(K_X)\cong H^0(K_X+D)$, hence $h^0(K_X+D)=1$. Set $V:=X\setminus D$:  we 
have just seen ${\ol P}_1(V)=1$ and $\ol q(V)=2$ by \eqref{eq:qbar}, since $D$ is irreducible. To see 
that $V$ is of log general type observe that $m(K_X+D)\ge mD+m\epsilon^*K_S\ge 
D+m\epsilon^*K_S$. 
  Since $(\epsilon^*K_S)D=K_S\Gamma=2$ and $D^2=-8$, for $m\ge 5$ the divisor 
$D+m\epsilon^*K_S$ is nef and big. It follows that $\ol\kappa(V)=2$. 
  
  Finally note that a general fiber of the elliptic fibration $X\to E$ meets $D$ in 2 
points, hence the general fiber of the induced  fibration $V\to E$  has $\ol g=2$.
  \medskip
  
  \end{ex}

  \section{$w$-fibers}\label{wfib}
In this section we explore the notion of $w$-fiber  (``wicked'' or ``weird'' fiber), 
which is key in the proof of the inequalities of \S \ref{sec: alb1}.

Let $X$ be a smooth projective surface and let $f\colon X\to B$ a fibration onto a  curve of genus $b\ge 0$ with  general fiber of  genus $g>0$. We say that a fiber  $\wt F$  of $f$ is a $w$-fiber if it is supported on a divisor $F$ such that $p_a(F)=0$. Equivalently, $\wt F$ is a $w$-fiber if all the components of $\wt F$ are rational curves,  the support $F$ of $\wt F$  is snc and its  dual graph is a tree.

\begin{rmk}\label{rem: w elliptic}
If $f$ is a relatively minimal elliptic fibration then the $w$-fibers are the fibers of type $I_n^*$ for $n\ge0$,  $II^*$, $III^*$, $IV^*$. If $f$ has fibers  of type $II$, $III$, $IV$ then by taking a suitable sequence of  blo$w$-ups one can  obtain a fibration $f'\colon X'\to B$ such that the pull backs of these fibers are $w$-fibers.
\end{rmk}

\begin{prop} \label{prop: w fibers}
Let $f\colon X\to B$ be a fibration of a smooth projective surface onto a smooth curve   with general fiber of genus $g>0$  and let $\nu$ denote the number of $w$-fibers of $f$. If  $\nu>0$, then:
 \begin{enumerate} 
 \item $g(B)=q(X)$ and $\chi(X)\ge \frac{1}{6}\nu$.
\item if  $\chi(X)= \frac{1}{6}\nu$ then:
 
\begin{itemize}
\item[(a)] $g=1$, $f$ has constant moduli  and the smooth fibers of $\ol f$ are isomorphic to the  elliptic curve $E$ with affine equation  $y^2=x^3+1$;
\item[(b)]  the   fibers of  the relative minimal model $\ol f\colon \ol X\to\pp^1$ of $f$ with singular support are  $\nu$  fibers of type $II$.
\end{itemize}
\end{enumerate}
\end{prop}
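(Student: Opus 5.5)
The plan is to prove both parts with three tools: the Albanese map (for $g(B)=q(X)$), Noether's formula on a relatively minimal model (for the bound on $\chi$), and Euler number contributions of the fibers. I separate the cases $g=1$ and $g\ge 2$.

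\textbf{Step 1: $g(B)=q(X)$.} Let $\wt F$ be a $w$-fiber. All its components are rational, so each one is contracted by $\alb_X$, and since the support $F$ is connected, $\alb_X(F)$ is a point. Every fiber of $f$ is algebraically equivalent to $\wt F$. Hence the image under $\alb_X$ of every fiber has zero homology class, so every fiber, in particular the general one, is contracted. Therefore $\alb_X$ factors through $B$, which gives $q(X)\le g(B)$. The reverse inequality $q(X)\ge g(B)$ always holds.

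\textbf{Step 2: Euler number contribution of a $w$-fiber.} Write $\chi_f:=\deg f_*\omega_{X/B}=\chi(X)-(g-1)(b-1)$, and pass to a relatively minimal model $\ol f\colon\ol X\to B$. This does not change $\chi$ or $\nu$, provided we count the images of the $w$-fibers. There we have $12\chi_f=K^2_{\ol X/B}+\sum_s\bigl(e(\ol F_s)+2g-2\bigr)$, every summand is $\ge 0$, and $K^2_{\ol X/B}\ge 0$. The support of a $w$-fiber is a tree of $k$ smooth rational curves, so it is connected with $b_1=0$ and $e=k+1$. Each contraction of a $(-1)$-curve in the fiber replaces a sphere by a point: it keeps $b_1=0$ and connectedness, and lowers both $e$ and $b_2$ by one. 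So the image support $\ol F$ still has $e(\ol F)=1+k'\ge 2$, and each $w$-fiber contributes at least $2g$.

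\textbf{Step 3, case $g=1$.} Here $\chi=\chi_f$ and $K^2_{\ol X/B}=0$, so $12\chi\ge 2\nu$, which is part (1). Suppose $\chi=\nu/6$. Then:
\begin{itemize}
\item every $w$-fiber has $e(\ol F)=2$, i.e. $k'=1$: irreducible rational support with Euler number $2$, which is a cuspidal curve, i.e. a fiber of type $II$;
\item every other fiber has $e=0$, i.e. is smooth or a multiple smooth elliptic curve.
\end{itemize}
The $j$-map $B\to\pp^1$ has no poles, because there are no fibers of type $I_n$ or $I_n^*$. So $j$ is constant, and since $j=0$ at a type $II$ fiber, $j\equiv 0$: $f$ has constant moduli and the smooth fibers are $y^2=x^3+1$. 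This gives (a) and (b). To get $B=\pp^1$ I would use the following facts:
\begin{itemize}
\item $\chi=\nu/6>0$, so $f$ is not isotrivial-smooth;
\item the monodromy of type $II$ has order $6$;
\item combined with the canonical bundle formula and Step 1, this forces $b=q=0$. If $b\ge1$, then $q=b$ means $\deg$ of the fundamental line bundle is positive yet no extra $1$-forms appear, and I expect a short argument via $\chi_f=\deg(R^1f_*\OO)^\vee$ to handle this.
\end{itemize}

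\textbf{Step 4, case $g\ge 2$.} This is the main obstacle, because the term $-(g-1)$ appears when $b=0$. I need to show $\chi>\nu/6$, so that no equality occurs here.
\begin{itemize}
\item If $b\ge1$, then $(g-1)(b-1)\ge0$, so $12\chi\ge 12\chi_f\ge 2g\nu>2\nu$.
\item If $b=0$, then $q=0$ by Step 1, so $\chi=1+p_g\ge1$. I combine this with Xiao's slope inequality $K^2_{\ol X/B}\ge\frac{4(g-1)}{g}\chi_f$. Together with Step 2 this gives
\[
\chi_f\ge \frac{2g^2\nu}{8g+4},\qquad\text{hence}\qquad \chi\ge \frac{2g^2\nu}{8g+4}-(g-1).
\]
\end{itemize}
For the $b=0$ case there are two ranges of $\nu$.
\begin{itemize}
\item For $\nu\le5$, the bound $\chi\ge1>\nu/6$ suffices.
\item For $\nu\ge6$, I check that $\bigl(\frac{2g^2}{8g+4}-\frac16\bigr)\nu>g-1$. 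At $\nu=6$ this is the elementary inequality $4g^2+4g+4>8g+4$, i.e. $g>1$, and the left side increases with $\nu$.
\end{itemize}
So the inequality is strict whenever $g\ge2$, and equality forces $g=1$, as in Step 3.
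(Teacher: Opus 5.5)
\textbf{The one step that fails.} At the end of Step 3 you try to prove $B=\pp^1$, i.e.\ $b=q=0$, in the equality case. This cannot be done, because it is false. The paper itself remarks at the end of Example~\ref{ex: elliptic fiber} that the construction works over any base curve $B$, with $q(X)=g(B)$ and $\chi(X)=\frac16\nu$. A concrete instance: take the Weierstrass fibration $y^2=x^3+a$ over an elliptic curve $B$, where $a\in H^0(B,L^{\otimes 6})$, $\deg L=1$, and $a$ has six simple zeros. This surface has $\chi=1$ and $q=1$. Its only singular fibers are six fibers of type $II$, which become $w$-fibers after a log resolution.

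So no argument through the canonical bundle formula or $\deg (R^1f_*\OO_X)^\vee$ will give $b=0$. The $\pp^1$ in (b) has to be read as $B$. The paper's proof never shows $B=\pp^1$, and in Theorem~\ref{thm: albdim1} the vanishing $q=0$ comes from the inequalities proved there, not from this proposition. Simply delete that list of bullets: what precedes it in Step 3 already proves (a) and (b) with $B$ in place of $\pp^1$.

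\textbf{The rest is correct.} Step 1, Step 2 and the case $g=1$ of (1) follow the paper. The paper bounds $\chi_{\rm top}(F_i)\ge\rho_i+2$ on the intermediate model $X'$ and then subtracts the $\rho_i$ contracted curves; you track Euler numbers through the contractions, which is the same count. You depart from the paper in two places.

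\emph{The case $g\ge 2$.} You use Xiao's slope inequality, which gives the strict inequality $\chi>\frac16\nu$ at once. The paper uses only Arakelov's $K^2_{\ol X/B}\ge 0$, obtaining $\chi\ge\frac g6\nu+(q-1)(g-1)$. That suffices for (1) but leaves the boundary case $q=0$, $\nu=6$, $K^2_{\ol X}=-8(g-1)$. The paper excludes it by noting that $H^1(K_{\ol X})=0$ and the irreducibility of all fibers make $|K_{\ol X}+\ol F|$ base point free, which forces $K^2_{\ol X}\ge -4(g-1)$. Your route replaces that ad hoc argument with a deeper theorem, and in exchange gives the sharper bound $\chi\ge\frac{2g^2\nu}{8g+4}-(g-1)$ when $b=0$.

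\emph{The equality case $g=1$.} You get constant moduli from the $j$-map: it is holomorphic because there are no fibers of type $I_n$ or $I_n^*$ with $n\ge 1$, hence constant, and it equals $0$ at a type $II$ fiber. The paper instead proves Lemma~\ref{lem: isotrivial} via a base change to acquire a section followed by stable reduction. Your argument is shorter and entirely standard.
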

The example that follows shows that  surfaces as in   Proposition \ref{prop: w fibers}, (2),  actually exist. 
\begin{ex} \label{ex: elliptic fiber}
Let $d:= 6k>0$ be an integer, and let $p_1, \dots p_d\in \pp^1$ be distinct points. Let $C\to \pp^1$ be the simple cyclic cover of degree 3 branched on $p_1+\dots+p_d$. The curve $C$ can be embedded in $\mathbb F_{2k}$ as an element of $|3\sigma_0|$, where $\sigma_0$ is a section with $\sigma_0^2=2k$.
Let $\pi \colon \ol X\to \mathbb F_{2k}$ be the double cover branched on $C+\sigma_{\infty}$, where $\sigma_{\infty}$ is the section at infinity, and let $\ol f\colon X\to \pp^1$ be the fibration induced by the projection $\mathbb F_{2k}\to \pp^1$. The surface $\ol X$ is smooth and using the standard formulae for double covers one obtains $\chi(X)=k$, $q(X)=0$. The fibration $\ol f$ is elliptic and has precisely $d=6k$  singular fibers, all  of type $II$, occurring above the points $p_1,\dots p_d\in \pp^1$. Taking a log resolution of the singular fibers one obtains a fibration $f\colon X\to\pp^1$ with $d=6k$ $w$-fibers

The automorphism of $C$ of order $3$ is the restriction of an automorphism $\psi$ of $\mathbb F_{2k}$ that fixes  $\sigma_{\infty}$ pointwise. So the general fiber of $\bar f$ is a double cover of $\pp^1$ branched over 4 points that are  invariant under an automorphism of $\pp^1$ of order 3, so it is isomorphic to  the elliptic curve $E$ with affine equation  $y^2=x^3+1$, the only elliptic curve  that has an automorphism of order 3, as predicted by Proposition \ref{prop: w fibers}.
In this construction one can replace $\pp^1$ by any smooth curve $B$, obtaining analogous 
examples with $q(X)=g(B)$ and $\chi(X)=\frac 16 \nu$. \par
It is not difficult to give an alternative description of $X$ as the minimal resolution 
of  the quotient of a product of two curves.
\end{ex}

The next example shows that for every $g\ge 2$ there are fibrations such that all their singular fibers are $w$-fibers. 
\begin{ex}
This is a variation of the previous example. Let $g\ge 2$, $d=4g+4$, let $p_1,\dots p_d\in 
\pp^1$ be distinct points and let $C\rightarrow \pp^1$ be the simple cyclic cover 
branched on $p_1+\dots +p_d$.  Then $C$ embeds into $\mathbb F_2$ as an element of 
$|(2g+1)\sigma_0|$, where $\sigma_0$ is a section with self-intersection 2.
Let $\pi \colon \ol X\to \mathbb F_{2}$ be the double cover branched on $C+\sigma_{\infty}$, where $\sigma_{\infty}$ is the section at infinity, and let $\ol f\colon X\to \pp^1$ be the fibration induced by the projection $\mathbb F_{2}\to \pp^1$. The surface $\ol X$ is smooth;
 the fibration $\ol f$ is elliptic and has precisely $d$  singular fibers, all  homeomorphic to $\pp^1$, occurring above the points $p_1,\dots p_d\in \pp^1$. Taking a log resolution of the singular fibers one obtains a fibration $f\colon X\to\pp^1$ with $d$ $w$-fibers. Arguing as in the previous example one shows that all the smooth fibers of $f$ are isomorphic to the hyperelliptic curve with affine equation  $y^2=x^{2g+1}+1$
 
 Using the standard formulae for double covers one can check that $\chi(X)>\frac 16 d$, in accordance with Proposition \ref{prop: w fibers}.
\end{ex}

In the proof of Proposition \ref{prop: w fibers} we will use the following: 
\begin{lemma} \label{lem: isotrivial}
Let $Y$ a smooth projective surface and let $h\colon Y\to B$ be an elliptic fibration onto a smooth curve. If all the  fibers of  $f$ with singular support are of type $II$ and  there is at least one such fiber, then $h$ is isotrivial and the smooth fibers of $h$ are isomorphic to the   elliptic curve $E$ with affine equation  $y^2=x^3+1$.
\end{lemma}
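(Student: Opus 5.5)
Throughout, $h\colon Y\to B$ is the fibration of the statement (the statement writes $f$ for it). The plan is to show that the $j$-invariant $J\colon B\to \pp^1$ of $h$ is constant and equal to $0$, by combining the local monodromy at fibers of type $II$ with properness of the $j$-map.

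First reduce to the relatively minimal case. Contracting $(-1)$-curves in fibers does not change the smooth fibers, so the isotriviality claim and the isomorphism class of the general fiber are unaffected. I will assume that ``fibers with singular support'' are read on the relatively minimal model $\ol h\colon\ol Y\to B$, as the paper does in Proposition \ref{prop: w fibers}(b). On $\ol Y$ the hypothesis then says the following: every singular fiber has singular support, so every singular fiber is of type $II$, and there is at least one of them. In particular there are no fibers of type $I_n$ with $n\ge 1$, and none of type $I_n^*$, $II^*$, $III$, $III^*$, $IV$, $IV^*$. Any multiple fibers ${}_mI_0$ are smooth after taking the reduced structure, so they do not affect the $j$-function.

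Now consider the functional invariant $J\colon B\to\pp^1\cong \overline{\mathcal M_{1,1}}$, which is a morphism from the complete curve $B$. The value $J=\infty$ occurs exactly over fibers of type $I_n$ or $I_n^*$ with $n\ge1$, and there are none. Hence $J(B)\subset \mathbb A^1$. Since $B$ is complete, $J$ is constant.

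Finally pin down the constant using Kodaira's table. A fiber of type $II$ has local monodromy of order $6$, and the value of $J$ at the point below it is $0$ (the elliptic curve with $j=0$). Because $J$ is constant and a fiber of type $II$ exists, $J\equiv 0$. So all smooth fibers have $j=0$, i.e. they are isomorphic to $E\colon y^2=x^3+1$, the unique curve with an automorphism of order $3$. In particular $h$ is isotrivial.

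The only delicate point is the bookkeeping about which fibers count as having singular support. A fiber of type $I_1$ (a nodal rational curve) has singular support, so it is excluded by hypothesis; this is exactly what removes the pole of $J$. Fibers of type $I_0^*$ have snc support and would be allowed if the hypothesis were read on a non-minimal model, but they do not create poles of $J$, so they cause no trouble. Beyond this, the argument simply cites Kodaira's table relating fiber types to the values of $J$.
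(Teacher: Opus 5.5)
Your proof is correct and is essentially the paper's argument: both rest on the facts that the moduli map $B\to\pp^1$ never takes the value $\infty$ (there are no fibers of type $I_n$, ${}_mI_n$ or $I_n^*$ with $n\ge 1$), so it is constant, and that a fiber of type $II$ forces the constant value to be $j=0$. The paper derives these facts by base change, first to get a section and then a stable model, and checks that a type $II$ fiber is replaced by $E$; you read them directly off Kodaira's table for the functional invariant $J$, which avoids the base changes. (Your side remark on $I_0^*$ is muddled: its support is reducible, hence singular, so it is excluded by the hypothesis on any model. The remark plays no role in the argument.)
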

\begin{proof}
This  result is certainly well known to experts, hence we just sketch the proof.

After making a ramified base change on $B$ and normalizing, we can assume that there is a section $ B\to X$  and hence that there are no multiple fibers. Note that, since  fibers of type $II$ cannot be multiple, the existence of fibers of type $II$ is preserved.
 After a further ramified base change, we assume that $h\colon X \to B$ has a stable model $Y\to B$, in which the singular fibers are replaced by stable 1-pointed curves. Then a standard calculation reveals that, since $X$ is smooth, the fiber of $Y\to B$ corresponding to a fiber of type $II$  curve in $h\colon X\to B$ is the elliptic curve $E$ with affine equation $y^2=x^3+1$; hence $Y\to B$ has smooth fibers. Since the moduli space of elliptic curves is $\mathbb A^1$, it follows that $Y\to B$ is isotrivial with fiber $E$.
\end{proof}

\begin{proof}[Proof of Prop. \ref{prop: w fibers}]
Write $q:=q(X)$.  The $w$-fibers of $f$ are contracted by the Albanese map $\alb_X$ of $X$, since they are supported on a union of rational curves. Hence all fibers of $f$ are contracted by $\alb_X$ and  the Albanese dimension of $X$ is at most 1. So  either $q=0$ or the fibration $f$ coincides with  the Albanese pencil. In either case $g(B)=q$. In particular, if $X$ is ruled then it is a rational surface.

Let $\ol f\colon \ol X\to B$ be the relative minimal model of $f$. The morphism $\epsilon \colon X\to \ol X$ factorizes as $X\to X'\to \ol X$, where the   first map contracts  precisely the $\epsilon$-exceptional curves not contained in a $w$-fiber of $f$. 
Let  $F_i$, $i=1,\dots \nu$,  be  the supports of the $w$-fibers of $f$ and  denote by $\rho_i$ the number of exceptional curves contained in $F_i$. So $F_i$ has at least $\rho_i+1$ components and $\chi\tp(F_i)\ge \rho_i+2$. Now the formula for the topological Euler characteristic of a fibration (\cite[Prop.~III.11.4]{bhpv}) gives:
\begin{equation}\label{eq: c2}
 c_2(X')\ge 4(g-1)(q-1)+\sum (\chi\tp(F_i) -(2-2g))\ge  4(g-1)(q-1)+\sum \rho_i+2g\nu.
 \end{equation}
Since $c_2(X')=c_2(\ol X)+\sum \rho_i$, this is the same as
$$
c_2(\ol X)\ge  4(g-1)(q-1)+2g\nu
$$
 If $g=1$, then $K_{\ol X}^2=0$, while for $g>1$ Arakelov's Theorem (\cite{appendix}) gives $K_{\ol X}^2\ge 8(q-1)(g-1)$. So, using Noether's formula, one obtains:
\begin{gather}\label{eq: arakelov} 
12(\chi(\ol X)-(q-1)(g-1))=(K_{\ol X}^2- 8(q-1)(g-1))+(c_2(\ol X)-4(q-1)(g-1)) \\
\ge c_2(\ol X)-4(q-1)(g-1)\ge 2g\nu, \nonumber
\end{gather}
namely 
\begin{equation}\label{eq: chi}
\chi(X)=\chi(\ol X)\ge \frac{g}{6}\nu+ (q-1)(g-1)
\end{equation}
If $g=1$ or  $q>0$ or  $q=0$ and $\nu\ge 6$, one has $\frac{g}{6}\nu+ (q-1)(g-1)\ge \frac 16\nu$, so \eqref{eq: chi} implies  $\chi(X)\ge \frac{1}{6}\nu$. 
If $g>1$,  $q=0$ and $\nu<6$, then $\chi(X)=1+p_g(X)\ge 1>\frac 1 6\nu$. This completes the proof of (1).
\medskip

Finally assume that $\chi(X)=\frac 16\nu$. Then  all the inequalities that appear in the proof are actually equalities. In particular, if \eqref{eq: c2} is an equality, then all the singular fibers of $\ol f$ that contribute to the computation of $c_2(\ol X)$ arise from the $w$-fibers of $f$,  are irreducible and have  $\chi\tp=2$.
If $g=1$, then all the  fibers of $\ol f$ with singular support are of type $II$ (cf. Remark \ref{rem: w elliptic}), hence  by Lemma \ref{lem: isotrivial} $X$ is as claimed.

Assume for contradiction $g\ge 2$. Since $\frac{g}{6}\nu+ (q-1)(g-1)\ge \frac{1}{6}\nu$  for $\nu\ge 6$, equation \eqref{eq: chi} implies  $\frac{g}{6}\nu+ (q-1)(g-1)= \frac{1}{6}\nu$, hence 	$q=0$. Therefore the restriction map $H^0(X, K_{\ol X}+\ol F)\to H^0(K_{\ol F})$ is surjective for any fiber $\ol F$ of $\ol f$. 		
   Since all fibers are irreducible, it follows that $|K_{\ol X}+\ol F|$ is base point free, and therefore $0\le (K_{\ol X}+\ol F)^2=K_{\ol X}^2+4g-4$, namely $K_{\ol X}^2\ge -4(g-1)$. On the other hand $K^2_{\ol X}= -8(g-1)$ because
 \eqref{eq: arakelov} is an equality, hence we have a contradiction that shows that $g>1$ does not occur.
 \end{proof}

\section{The inequality $\ol P_1\ge\frac16( \ol q-5)$  for surfaces of Albanese dimension 1}\label{sec: alb1}

Here we apply the results of the previous section and prove the following inequality for surfaces with Albanese dimension 1:

\begin{thm}\label{thm: albdim1}
Let $V$ be a quasi projective surface with $\operatorname{alb dim} (V)=1$ and $\ol\kappa(V)\ge 0$,
 then 
$$ \overline{P}_1(V)\geq  \frac{1}{6}(\overline{q}(V)-5).$$
If equality holds, then $q=0$ and the fibration $h\colon X \to\pp^1$ is as in Proposition \ref{prop: w fibers}, (2). In addition,  up to WWPB-equivalence   $D=F_1+\dots +F_r$ or $D=H+F_1+\dots +F_r+E$, where  $F_1, \dots F_r$ are the supports of the $w$-fibers of $h$,  $H$ is a section of $h$, and  $E$ is a vertical divisor that does not support a full fiber of $h$.
\end{thm}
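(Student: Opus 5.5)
Let $(X,D)$ be a compactification and, since $\albdim(V)=1$, let $a_V$ factor through an irrational pencil $V\to C=\ol C\setminus\Delta$. After blowing up we may assume it extends to a fibration $h\colon X\to \ol C=:B$, and that $D$ is snc. Because $\ol q(V)$ is determined by $C$ (all log 1-forms are pulled back from $C$, since the Albanese image is a curve), we have $\ol q(V)=\ol g(C)=g(B)+|\Delta|-1$ when $\Delta\neq\emptyset$, and $\ol q(V)=g(B)$ otherwise. The fibers of $h$ over points of $\Delta$ are entirely contained in $D$.

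We split $D=D_h+D_v$ into horizontal and vertical parts. The goal is to show that $\ol P_1(V)=h^0(K_X+D)$ grows by roughly $1/6$ per point of $\Delta$. We will use the residue sequence for $D' :=$ the union of the full fibers $F_1,\dots,F_s$ over $\Delta$ (reduced), and start from $h^0(K_X+D)\ge h^0(K_X+\sum F_i)$.

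For a connected fiber support $F_i$ we have $h^0(\omega_{F_i})=p_a(F_i)$. Taking cohomology in $0\to K_X\to K_X+\sum F_i\to \oplus\omega_{F_i}\to 0$, the connecting map to $H^1(K_X)$ is dual to $H^1(\OO_X)\to\oplus H^1(\OO_{F_i})$, whose image meets at most a subspace of dimension $q(X)$ plus the relation coming from the fibers being linearly equivalent. We therefore aim for the estimate
$$h^0(K_X+\textstyle\sum F_i)\ \ge\ p_g(X)+\sum_i p_a(F_i)-q(X)+ (s-1)\cdot[\text{correction}],$$
and more precisely the piece of the sequence $\oplus H^0(\OO_{F_i})\to H^1(K_X)$ contributes $s-1$ sections, since $F_i\equiv$ multiples of each other give rank $1$. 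This yields roughly $\ol P_1\ge p_g+(s-1)+\sum p_a(F_i)-(\text{stuff})$, which is too strong. We therefore need to redo this carefully: the honest formula should be
$$\ol P_1(V)\ge p_g(X)-q(X)+g(B)+\sum_i p_a(F_i)+(s-1)-\ldots$$
Fibers over $\Delta$ that are not $w$-fibers have $p_a(F_i)\ge1$ and give a contribution of at least $1$ each. The dangerous ones are the $w$-fibers, which contribute nothing. Here Proposition \ref{prop: w fibers} enters: with $\nu$ $w$-fibers we get $\chi(X)\ge\nu/6$ and $q(X)=g(B)$, so $p_g(X)\ge \chi(X)-1+q\ge \nu/6-1+g(B)$. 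Combining gives $\ol P_1\ge \frac16\nu+(s-\nu)+g(B)-1$ (roughly), while $\ol q\le g(B)+s-1$. This gives $\ol P_1\ge\frac16(\ol q-5)$ after checking the constants; the $-5$ arises from the $-1$ shifts together with the case $g(B)=0$.

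The hypothesis $\ol\kappa\ge0$ is used via Kawamata subadditivity (Theorem \ref{subadd}) to exclude the situation where $\Delta$ has few points and the general fiber is $\mathbb P^1$ minus at most one point. In that situation $p_a$-type contributions of the horizontal part would be needed. More precisely, if the general fiber $\Phi$ of $V\to C$ has $\ol\kappa(\Phi)=-\infty$, then $\ol\kappa(C)$ must handle it, which forces the horizontal part $D_h$ to meet fibers at least twice. In that case we add $D_h$ to the residue sequence and gain sections. This case analysis, namely $g=0$ with $D_h\neq0$ versus $g>0$, is where I expect the main obstacle: precisely bounding the rank of the connecting maps so that the constants come out to exactly $-5$. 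For the equality case, every inequality must be sharp. This forces all fibers over $\Delta$ to be $w$-fibers, $g(B)=q=0$, and $\chi=\nu/6$, so Proposition \ref{prop: w fibers}(2) applies. The extra horizontal part, if present, must be a single section $H$, since more sections would add sections of $K_X+D$; the remaining vertical components must not support full fibers, which gives the stated shape of $D$ up to WWPB-equivalence.
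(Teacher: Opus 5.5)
\textbf{Gap: the case where the general fiber of $h$ is rational.} You flag the case where the general fiber $F$ of $h$ is rational as the ``main obstacle'' but do not treat it, and your strategy cannot handle it. Since $(K_X+\sum F_i)\cdot F=-2$ and $F$ moves, $h^0(K_X+\sum F_i)=0$, so the reduction $h^0(K_X+D)\ge h^0(K_X+\sum F_i)$ yields nothing. Moreover $\chi(X)$ may be negative, and Proposition~\ref{prop: w fibers} does not apply.

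``Adding $D_h$ to the residue sequence'' is not yet an argument. That way, or by Riemann--Roch, you only reach $h^0(K_X+D)=\chi(X)+\frac12D(K_X+D)+\rho-1$, with $\rho$ the number of connected components of $D$; this uses that $H^1(\OO_X)=h^*H^1(\OO_B)\to H^1(\OO_D)$ is injective when $D_h\neq0$. The missing idea is a lower bound for $D(K_X+D)$ that grows with $s=|\Delta|$. The paper gets it from the logarithmic Hurwitz formula on each horizontal component $H_i$ of degree $m_i$ over $B$: $(K_X+D)H_i\ge m_i(2q(X)-2+s)$, while each $F_i$ contributes at least $2p_a(F_i)-1$. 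With $m=D_hF\ge2$ this gives $h^0(K_X+D)\ge\frac12\ol q(V)-\frac32$.

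Note that $m\ge2$ does not come from Kawamata subadditivity, which only bounds $\ol\kappa(V)$ from below. It holds because $D_hF\le1$ would give $(K_X+D)F<0$ on a covering family of curves, hence $\ol\kappa(V)=-\infty$.

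The same estimate is what forces $D_hF\le1$ in the equality case: for $q(X)=0$ and $g(F)>0$ it gives $h^0(K_X+D)-\frac16(\ol q(V)-5)\ge\frac12(m-1)(s-2)$. Your assertion that a larger horizontal part ``would add sections'' is unsupported, since you threw $D_h$ away at the start. Similarly, removing $E$ up to WWPB-equivalence when $D_h=0$ requires noting that the connected components of $E$ contract to points on smooth fibers of the relatively minimal model, whose singular fibers all come from $w$-fibers.

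\textbf{The case $g(F)>0$.} Here your route (discard $D_h$, keep only the full fibers $F_1,\dots,F_s$ over $\Delta$) does work. It is slightly more economical than the paper, which runs two separate Riemann--Roch computations for $H=0$ and $H>0$. However, the bookkeeping you wrote is wrong.

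There is no $(s-1)$ term. That number comes from the computation of $\ol q$ (the classes of the $F_i$ span a line in $H^1(\Omega^1_X)$), or equivalently from $h^1(\OO_X(-\sum F_i))$ in Riemann--Roch, where it is cancelled by the $-s$ in $\sum(p_a(F_i)-1)$. It has no counterpart in the residue sequence of $K_X+\sum F_i$.

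Conversely, bounding the connecting map only by rank $\le q(X)$ loses $q(X)$. It leaves $\ol P_1(V)\ge\frac s6-1$, short of the required $\frac{s+q(X)}6-1$. What you need is this: $\albdim X\le1$ (because $\alb_X|_V$ factors through $\alb_V$) forces $q(X)=g(B)$ and $H^1(\OO_X)=h^*H^1(\OO_B)$, which restricts to zero on every fiber. Hence the connecting map vanishes and $h^0(K_X+\sum F_i)=p_g(X)+\sum p_a(F_i)$ exactly.

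Now let $t\le\min(s,\nu)$ be the number of $w$-fibers among the $F_i$; your $s-\nu$ should be $s-t$. Use $\chi(X)\ge\nu/6$, which is Proposition~\ref{prop: w fibers} if $\nu>0$ and follows from $\chi(X)\ge0$ otherwise, as $X$ is not irrationally ruled. You then get $\ol P_1(V)-\frac16(\ol q(V)-5)\ge\frac56(q(X)+s-t)\ge0$ for $s\ge1$.
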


Theorem \ref{thm: albdim1} is sharp, as shown by the next example.
\begin{ex} Let $\ol h\colon \ol X \to \pp^1$ be a fibration as in Example \ref{ex: elliptic fiber}, with $\chi(X)=k$ and $6k$ singular fibers homeomorphic to $\pp^1$,   let $X\to\ol X$ be a log resolution of the  singular fibers of $\ol h$ and let $h\colon X\to\pp^1$ be the induced fibration. Denote by $F_1,\dots F_{6k}$ the supports of the singular fibers of $h$ and let $\Delta$ be the strict transform on $X$ of the section  $\sigma_{\infty}$ of $\mathbb F_{2k}$.  Set $D=F_1+\dots +F_{6k}$ or $D=F_1+\dots +F_{6k}+\Delta$ and $V:=X\setminus \Delta$; in either case by \eqref{eq:qbar}  the Albanese pencil is the restriction of $h$ and $\ol q (V)=6k-1$. In addition, standard computations (cf. the proof of Theorem \ref{thm: albdim1} below) show $\ol P_1(V)=k-1$.

Let now $x\in \Delta$ is a point not on $F_1+\dots +F_{6k}$,  let $X'\to X$ be the blow-up at $X$ and let $E$ be the exceptional curve.
Denote by $\Delta'$ the strict transform of $\Delta$ and use the same letter for curves on $X$ and their total transforms on $X'$.
Set $V':=X'\setminus (\Delta'+F_1+\dots +F_{6k})$. Then $\ol q(V')=\ol q(V)=6k-1$ by construction  and $\ol P_1(V')\le \ol P_1(V)=k-1$. On the other hand, $\ol P_1(V')\ge k-1$ by Theorem \ref{thm: albdim1}, so $\ol P_1(V')=k-1$. Summing up, the surfaces $V$ and $V'$  both satisfy equality in Theorem \ref{thm: albdim1} but the natural map $V\to V'$ is a so-called ``half-point attachment'' (\cite[\S 2]{Iitaka1979}), which in general is  not a WWPB equivalence.
\end{ex}

\begin{proof}[Proof of Thm. \ref{thm: albdim1}]
The statement is void for $\ol q(V)\le 5$, so we assume $\ol q(V)\ge 6$.
If $D=0$ it is well known that 
\begin{align}
 \ol P_1(V)&=p_g(X)=\chi(X)+q(X)-1\notag\\
 &\ge q(X)-1=\ol q(X)-1\label{eq:prog}\\
 &>\frac16( \ol q(V)-5)\notag,
\end{align}
where \eqref{eq:prog} comes from the fact that $\kappa(X)\geq 0$. So we assume $D>0$.
\smallskip

Let $X$ be a compactification of $V$ with snc boundary $D$ such that the Albanese pencil 
$V\rightarrow C$ compactifies to a fibration $h\colon X\rightarrow \overline{C}$ and let 
$F$ be a general fiber of $h$.
 Then we 
can write 
$$
D=H+\sum_{i=1}^r F_i + E,
$$
where $H$ is $h$-horizontal, the $F_i$ are the (reduced) supports of whole fibers of $h$ and $E$ 
is an $h$-vertical divisor whose  connected components  do not support whole fibers. 
Let $V'$ be the surface $X\backslash (H+F_1+\cdots+F_r)$ obtained from $V$ by 
``reattaching'' $E$. Since $h$ induces a fibration $h'\colon V'\rightarrow C$, we have   $\ol g(C)=\ol{q}(V)\le\ol{q}(V')$ 
and therefore $\ol{q}(V')=\ol{q}(V)$.
The general fiber of $h'$ is the same as the general fiber of 
$\alb_V\colon V\rightarrow C$, which we denote by $F$. By Kawamata subadditivity (Theorem 
\ref{subadd}), we have that 
\[
 \ol{\kappa}(V')\geq\ol{\kappa}(C) +\ol{\kappa}(F)\geq0.
\]
In addition, 

$$h^0(K_X+D)\geq h^0\left(K_X+H+F_1+\cdots+F_r\right),$$ 
 thus we can assume that $E=0$.\par

Note that either $q(X)=g(\ol C)=0$ or $\albdim X=1$,    $h\colon X\rightarrow 
\overline{C}$ is  the Albanese pencil of $X$, and  $g(\overline{C})=q(X)\geq 1$.\par
 When $H\neq 0$, we write $H=H_1+\cdots+H_s$ with $H_i$ irreducible and set $m_i:=H_iF$,  
$m:=\sum m_i=HF$. Let $\psi_i\colon H_i\to C$ be the morphism induced by 
$h$:  the logarithmic ramification formula implies $K_{H_i}(F_1+\dots +F_r)\ge 
\psi_i^*(K_{\ol C}+\Delta)$,  hence $\deg(K_{H_i}(F_1+\dots +F_r))\ge  m_i(2q-2+r)$ for 
$i=1,\dots s$.	So 
\begin{align}
D(K_X+D)&=\sum_1^s H_i(K_X+D)+\sum_1^r F_i(K_X+D)\notag\\
 &\ge \sum_1^s \deg(K_{H_i}(F_1+\dots +F_r))+\sum_1^r \deg(K_{F_i}(H))\nonumber \\
 \end{align}
 Since $H\cdot F_i\geq 1$ we have that $\deg(K_{F_i}(H))\geq 2p_a(F_i)-2+1= 2p_a(F_i)-1$. 
In particular we get:
 \begin{align}\label{eq: D(K+D)}
D(K_X+D) &\ge m(2q(X)-2+r)+\sum_1^r(2p_a(F_i)-1).
 \end{align}
 Observe that 
 \begin{equation}
  2q(X)-2+r=\ol{q}(V)+q(X)-1.
 \end{equation}
 We suppose first that $g(F)>0$. In particular we have that $X$ is not an irrational 
ruled surface (or the general fiber of the Albanese pencil would be $\mathbb{P}^1$) and 
in particular $\chi(X)\geq 0$. Thus we have that the inequality of Proposition \ref{prop: 
w fibers}
$$
\chi(X)\geq \frac16\nu
$$
holds also when there are no $w$-fibers.\par
We consider first  the case where the divisor $D$ is vertical, that is  $H=0$.
 Considering  the cohomology sequence associated with the short exact sequence
$$
0\rightarrow \sO_X\left(-\sum F_i\right)\longrightarrow \sO_X\longrightarrow \oplus 
\sO_{F_i}\rightarrow 0
$$
we observe that  since the $F_i$ are 
contracted by the Albanese map of $X$,
the restriction map $H^1(\sO_X)\rightarrow H^1( \oplus 
\sO_{F_i})$ is zero. 
Therefore we have that 
$h^1(X, D)=q(X)+r-1=\overline{q}(V)$. Then, by Riemann-Roch, we have
$$h^0(X,K_X+D)=\chi(X)+\sum (p_a(F_i)-1)+ \overline{q}(V).$$

Let $0\leq t\leq r$ be the number of $F_i$ which support a  $w$-fiber.

By Proposition \ref{prop: w fibers},
we have that
\begin{equation*}
 h^0(X,K_X+D)\geq \chi(X) -t+ \overline{q}(V) \geq  -\frac{5}{6}t+ \overline{q}(V),
\end{equation*}
 Now we observe that 
$$\ol{q}(V)-\frac{5}{6}t=\frac56(r-t+q(X)-1)+\frac16(q(X)+r-1)\ge \frac 16 \ol 
q (V)-\frac56.$$
Therefore, the inequality is proven in this case.\par
Suppose next that $H>0$. In this case the map $H^1(\OO_X)\cong H^1(\OO_{\ol C})\to 
H^1(\OO_D)$ is injective, so  taking cohomology in the short exact sequence
$$
0\rightarrow \sO_X\left(-D\right)\longrightarrow \sO_X\longrightarrow 
\sO_D\rightarrow 0
$$
 we get $h^1(-D)=\rho-1$, where $\rho$ is the number of connected components of $D$.  Hence
\begin{equation}\label{eq: RR}
h^0(X, K_X+D)=\chi(X)+\frac12D(K_X+D)+\rho-1\geq \chi(X)+\frac12D(K_X+D),
\end{equation}
where equality holds exactly when $D$ is connected.

As before let $0\le t \le r $  be the number of the $F_i$ that support a $w$-fiber; 
equations  \eqref{eq: RR}, \eqref{eq: D(K+D)} and Proposition 
\ref{prop: w fibers} give:
 \begin{align}
h^0(X, K_X+D)&\ge \frac 16t+(q(X)-1)+\frac{1}{2}(r-t)\\\nonumber
&= \frac16(q(X)-1+r)+\frac13(r-t)+\frac 56(q(X)-1)\notag\\ 
&\ge \frac 16 \ol q(V)-\frac 56.
\end{align}

Now we turn our attention to the case in which $g(F)=0$. Observe that, since $\ol \kappa 
(V)\geq 0$, then $H\neq 0$,  and $m\ge 2$. Thus, 
\eqref{eq: D(K+D)} gives:
\begin{gather}
h^0(X, K_X+D)\ge 2q(X)-2+r -\frac 12r =\frac12\ol q(V)+\frac32(q(X)-1)\ge \frac12\ol 
q(V)-\frac32>\frac 16\ol q(V)-\frac 56
\end{gather}

The proof of the inequality is complete. Now assume that $\ol P_1(V)=\frac 16\ol 
q(V)-\frac 56$. Then all the inequalities used in the proof are equalities, and so, in 
particular:
\begin{itemize}
\item[(a)] $D>0$, $q(X)=0$, $g(F)>0$, $HF=0,1$; 
\item[(b)] $r-1=\ol q\ge 5$ and  $F_1,\dots F_r$ are precisely the  supports of the $w$-fibers of $h$ and $\chi(X)=\frac16 r$.
\end{itemize}
So  by Proposition \ref{prop: w fibers}, (2), the fibration $h\colon X\to \pp^1$ is as 
described therein,   $D=H+F_1+\dots +F_r+E$, where either $H=0$ or $H$ is a section of 
$h$, $F_1, \dots F_r$ are the supports of the $w$-fibers of $h$ and $E$ is a divisor 
contracted by $h$ that  does not support a full fiber. Since the only singular fibers of 
the relative minimal model $\ol h\colon \ol X\to \pp^1$ are those arising from the 
$w$-fibers, the connected components of $E$ are contracted by $ X\to \ol X$ to points 
lying on smooth fibers of $\ol h$ (cf. \ref{ssec: WWPB}). In particular if $H=0$, we may assume $E=0$ up to WWPB equivalence.
 \end{proof}

\end{document}